\begin{document}

 \bibliographystyle{plain}
 \newtheorem{theorem}{Theorem}
 \newtheorem{lemma}[theorem]{Lemma}
 \newtheorem{corollary}[theorem]{Corollary}
 \newtheorem{problem}[theorem]{Problem}
 \newtheorem{conjecture}[theorem]{Conjecture}
 \newtheorem{definition}[theorem]{Definition}
 \newtheorem{prop}[theorem]{Proposition}
 \numberwithin{equation}{section}
 \numberwithin{theorem}{section}
\newlist{mylist}{enumerate}{5}
\setlist[mylist]{label=\arabic*}
 \newcommand{\mo}{~\mathrm{mod}~}
 \newcommand{\mc}{\mathcal}
 \newcommand{\rar}{\rightarrow}
 \newcommand{\Rar}{\Rightarrow}
 \newcommand{\lar}{\leftarrow}
 \newcommand{\lrar}{\leftrightarrow}
 \newcommand{\Lrar}{\Leftrightarrow}
 \newcommand{\zpz}{\mathbb{Z}/p\mathbb{Z}}
 \newcommand{\mbb}{\mathbb}
 \newcommand{\B}{\mc{B}}
 \newcommand{\cc}{\mc{C}}
 \newcommand{\D}{\mc{D}}
 \newcommand{\E}{\mc{E}}
 \newcommand{\F}{\mathbb{F}}
 \newcommand{\cF}{\mc{F}}
 \newcommand{\G}{\mc{G}}
 \newcommand{\ZG}{\Z (G)}
 \newcommand{\FN}{\F_n}
 \newcommand{\I}{\mc{I}}
 \newcommand{\J}{\mc{J}}
 \newcommand{\M}{\mc{M}}
 \newcommand{\nn}{\mc{N}}
 \newcommand{\cQ}{\mc{Q}}
 \newcommand{\cP}{\mc{P}}
 \newcommand{\U}{\mc{U}}
 \newcommand{\X}{\mc{X}}
 \newcommand{\Y}{\mc{Y}}
 \newcommand{\itQ}{\mc{Q}}
 \newcommand{\sgn}{\mathrm{sgn}}
 \newcommand{\C}{\mathbb{C}}
 \newcommand{\R}{\mathbb{R}}
 \newcommand{\T}{\mathbb{T}}
 \newcommand{\N}{\mathbb{N}}
 \newcommand{\Q}{\mathbb{Q}}
 \newcommand{\Z}{\mathbb{Z}}
 \newcommand{\A}{\mathbb{A}}
 \newcommand{\ff}{\mathfrak F}
 \newcommand{\fb}{f_{\beta}}
 \newcommand{\fg}{f_{\gamma}}
 \newcommand{\gb}{g_{\beta}}
 \newcommand{\vphi}{\varphi}
 \newcommand{\whXq}{\widehat{X}_q(0)}
 \newcommand{\Xnn}{g_{n,N}}
 \newcommand{\lf}{\left\lfloor}
 \newcommand{\rf}{\right\rfloor}
 \newcommand{\lQx}{L_Q(x)}
 \newcommand{\lQQ}{\frac{\lQx}{Q}}
 \newcommand{\rQx}{R_Q(x)}
 \newcommand{\rQQ}{\frac{\rQx}{Q}}
 \newcommand{\elQ}{\ell_Q(\alpha )}
 \newcommand{\oa}{\overline{a}}
 \newcommand{\oI}{\overline{I}}
 \newcommand{\dx}{\text{\rm d}x}
 \newcommand{\dy}{\text{\rm d}y}
\newcommand{\cal}[1]{\mathcal{#1}}
\newcommand{\cH}{{\cal H}}
\newcommand{\diam}{\operatorname{diam}}
\newcommand{\bx}{\mathbf{x}}
\newcommand{\Ps}{\varphi}
\newcommand{\va}{\bm\alpha}
\newcommand{\vg}{\bm\gamma}
\newcommand{\zp}{\mathbb{Z}_{\textit{p}}}
\newcommand{\qp}{\mathbb{Q}_{\textit{p}}}
\newcommand{\zq}{\mathbb{Z}_{\textit{q}}}
\newcommand{\bA}{\mathbb{A}}
\newcommand{\bN}{\mathbb{N}}
\newcommand{\bQ}{\mathbb{Q}}
\newcommand{\bR}{\mathbb{R}}
\newcommand{\bZ}{\mathbb{Z}}
\newcounter{cases}
\newcounter{subcases}[cases]
\newenvironment{mycases}
  {%
    \setcounter{cases}{0}%
    \setcounter{subcases}{0}%
    \def\case
      {%
        \par\noindent
        \refstepcounter{cases}%
        \textbf{Case \thecases.}
      }%
    \def\subcase
      {%
        \par\noindent
        \refstepcounter{subcases}%
        \textit{Subcase (\thesubcases):}
      }%
  }
  {%
    \par
  }
\renewcommand*\thecases{\arabic{cases}}
\renewcommand*\thesubcases{\roman{subcases}}

\parskip=0.5ex

\title[A three gap theorem for the adeles]{A three gap theorem for the adeles}
\author{Akshat~Das, Alan~Haynes}

\allowdisplaybreaks

\begin{abstract}
We prove a natural generalization of the classical three gap theorem, for rotations on adelic tori. Our proof is an adaptation to the adeles of the lattice based approach to gaps problems in Diophantine approximation originally introduced by Marklof and Str\"ombergsson.
\end{abstract}
\keywords{Steinhaus problem, three gap theorem, adeles}
\subjclass{11J71, 11S82, 37A44, 37P05}
\thanks {Research supported by NSF grant DMS 2001248}
\maketitle

\section{Introduction}\label{sec:intro}
The classical three gap theorem (also known as the three distance theorem and as the Steinhaus problem) asserts that, for any $\alpha\in\R$ and $N\in\N$, the collection of points $n\alpha~\mathrm{mod}~1,~1\le n\le N,$ partitions $\R/\Z$ into component arcs having one of at most three distinct lengths. This theorem was first proved independently in the 1950's by S\'{o}s \cite{Sos1,Sos2}, Sur\'{a}nyi \cite{Suranyi58}, and \'{S}wierczkowski \cite{Swier59}, and it has since been reproved numerous times and generalized in many ways (see the introductions and bibliographies of \cite{HaynesMarklof2020a,HaynesMarklof2020b}).

With a view towards understanding problems in dynamics which are sensitive to arithmetic properties of return times to regions, it is desirable to generalize classical results about rotations on $\R/\Z$ to the setting of rotations on adelic tori. In this paper we will prove an adelic version of the three gap theorem. For readers unfamiliar with the adeles or adelic tori, we provide definitions and basic properties in the next section. Here we briefly present our results.

Let $\mc{P}=\{p_1,p_2,\ldots\}$ be a non-empty subset of prime numbers and let $\A_\mc{P}$ denote the projection of the rational adeles $\A$ onto the places indexed by $\{\infty\}\cup\mc{P}$. The additive group $\Gamma_{\mc{P}} = \Z[1/p_1,1/p_2,\ldots]$ can be diagonally embedded into $\A_{\cP}$ as a subgroup, and we identify it with its image under this embedding. The adelic torus $X_\cP$ is then defined as $X_{\cP} = \bA_{\cP} / \Gamma_{\cP}.$ We will write elements $\bm{\alpha}\in X_\cP$ as $\bm{\alpha}=(\alpha_\infty,\alpha_{p_1},\alpha_{p_2},\ldots)$.

We are going to define gaps as nearest neighbor distances, but first we must specify a metric on $X_{\cP}.$ A natural choice of metric on $\A_\cP$ is given by
\begin{equation}\label{eqn.MetricDef1}
	|\bm{\alpha}-\bm{\beta}|=\begin{cases}
		\max\left\{|\alpha_\infty-\beta_\infty|_\infty,\max_{p\in\cP}|\alpha_p-\beta_p|_p\right\}&\text{if }~ |\cP|<\infty,\\ &\\
\max\left\{|\alpha_\infty-\beta_\infty|_\infty,\max_{p\in\cP}\frac{|\alpha_p-\beta_p|_p}{p}\right\}&\text{if }~ |\cP|=\infty.
	\end{cases}
\end{equation}
This metric induces the usual restricted product topology on $\A_\cP$, and we use it to define the metric
\begin{equation}\label{eqn.MetricDef2}
	\|\bm{\alpha}-\bm{\beta}\|=\min\{|\bm{\alpha}-\bm{\beta}-\bm\gamma|:\gamma\in\Gamma_\cP\}
\end{equation}
on $X_\cP$, which induces the quotient topology (see \cite{HaynesMunday2013,TorbaGalindo2013}).

Given $\bm{\alpha} \in \bA_{\cP}$ and $N \in \N$, let
\begin{equation*}
	S_{N}(\bm{\alpha}) = \lbrace \bm\xi_n=n\bm{\alpha} + \Gamma_{\cP}: 1 \leq n \leq N \rbrace \subset X_{\cP},
\end{equation*}
and for each $1\le n\le N$ let $\delta_{n,N}=\delta_{n,N}(\bm\alpha)$ denote the distance from $\bm\xi_n$ to its nearest neighbor in $S_N(\alpha)$. That is,
\begin{equation}\label{eqn.DeltaDef}
\delta_{n,N}=\min\left\{\|\bm\xi_m-\bm\xi_n\|>0 : 1\le m\le N\right\}.
\end{equation}
We are interested in the number of distinct nearest neighbor distances, which we write as
\begin{equation*}
	g_N(\bm\alpha)=|\{\delta_{n,N}(\bm\alpha) \colon 1 \leq n \leq N\}|.
\end{equation*}
The main result of this paper is the following theorem.
\begin{theorem}\label{thm.3GapsAdeles1}
Let $\cP$ be any non-empty set of prime numbers. For any $\bm\alpha\in X_\cP$ and $N \in \N$, we have that $g_N(\bm\alpha) \leq 3$. Furthermore, there exist $\bm\alpha\in X_\cP$ and $N \in \N$ for which $g_N(\bm\alpha) = 3$.
\end{theorem}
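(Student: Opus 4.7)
The plan is to adapt the Marklof--Str\"ombergsson lattice based approach to the adelic setting. To begin, observe that
\[
 \delta_{n,N}(\bm\alpha)=\min\bigl\{\|k\bm\alpha\|:k\in\{1-n,\ldots,N-n\}\setminus\{0\}\bigr\},
\]
so each nearest-neighbor distance is the smallest $X_\cP$-length of an integer multiple of $\bm\alpha$ within a window of $N-1$ consecutive integers, and as $n$ varies this window slides across $\{-(N-1),\ldots,N-1\}$. I would encode this geometrically via the adelic lattice
\[
\Lambda_{\bm\alpha,N}=\bigl\{\,(k\bm\alpha-\bm\gamma,\,k/N):k\in\Z,\,\bm\gamma\in\Gamma_\cP\,\bigr\}\subset \A_\cP\times\R,
\]
which is discrete and has finite covolume because $\Gamma_\cP$ is discrete and cocompact in $\A_\cP$. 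Writing a lattice point as $(\bm u,t)$, the distance $\delta_{n,N}(\bm\alpha)$ equals the minimum of the adelic norm $|\bm u|$ over nonzero lattice points with $t=k/N$ for some $k\in\{1-n,\ldots,N-n\}\setminus\{0\}$.

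The heart of the argument is to produce an adelic Minkowski-reduced basis for $\Lambda_{\bm\alpha,N}$ inside the slab $|t|<1$: two distinguished vectors $\bm w_+=(\bm u_+,t_+)$ and $\bm w_-=(\bm u_-,t_-)$, with $0<t_+<1$ and $-1<t_-<0$, each minimising $|\bm u|$ among nonzero lattice elements with second coordinate in the respective half-slab. A combinatorial case analysis exactly parallel to the classical three gap proof then shows that, for every $n\in\{1,\ldots,N\}$, the minimum defining $\delta_{n,N}(\bm\alpha)$ is attained at one of $\bm w_+$, $\bm w_-$, or $\bm w_++\bm w_-$, so that $\delta_{n,N}(\bm\alpha)\in\{|\bm u_+|,|\bm u_-|,|\bm u_++\bm u_-|\}$ and $g_N(\bm\alpha)\le 3$.

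The principal technical obstacle lies in the Minkowski step, since $\A_\cP$ is typically infinite-dimensional and the planar geometry of $\R^2$ cannot be invoked directly as in the classical proof. The key observation is that all lattice points relevant to the minimum come from the single one-parameter orbit $\{k\bm\alpha\bmod\Gamma_\cP\}_{k\in\Z}$ and the norm $|\cdot|$ descends to this orbit; combined with compactness of $X_\cP$ and the restricted product structure of $\A_\cP$, a suitable adelic Minkowski/successive-minima argument should produce $\bm w_\pm$. For the sharpness part, it suffices to take $\cP$ finite, for instance $\cP=\{p\}$, and exhibit $\bm\alpha\in X_\cP$ and $N$ such that $|\bm u_+|$, $|\bm u_-|$, and $|\bm u_++\bm u_-|$ are pairwise distinct and all realised among the nearest-neighbor distances; a suitable choice of $\bm\alpha$ with nontrivial contributions at both the archimedean and $p$-adic places, verified by direct computation, supplies the required example.
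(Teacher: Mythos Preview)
Your overall plan---encode the nearest-neighbor distances via an adelic lattice and bound the number of values by a Marklof--Str\"ombergsson style argument---is exactly the strategy the paper uses. However, your specific formulation has two genuine gaps.

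\textbf{The symmetry problem in your choice of $\bm w_\pm$.} Because the metric on $X_\cP$ is symmetric, $\|k\bm\alpha\|=\|{-k}\bm\alpha\|$ for all $k$, so your lattice $\Lambda_{\bm\alpha,N}$ is invariant under $(\bm u,t)\mapsto(-\bm u,-t)$. Hence the minimiser of $|\bm u|$ over the negative half-slab $-1<t<0$ is simply $-\bm w_+$: you are forced into $\bm w_-=-\bm w_+$, whence $|\bm u_+|=|\bm u_-|$ and $\bm w_++\bm w_-=0$. Your three candidate values collapse to one, which cannot be right. The classical three-gap argument you are imitating distinguishes ``left'' and ``right'' neighbours on the circle; there is no such order structure on $X_\cP$, so that combinatorics does not transfer directly. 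The paper breaks this symmetry differently: it keeps the asymmetric Archimedean window $-t<u_\infty<1-t$ and, when more than two values occur, applies pigeonhole to the \emph{sign of the Archimedean component} $v_{i,\infty}$ of the second coordinate. Two of any three candidate vectors then share a sign, so their difference satisfies $|v_{i,\infty}-v_{j,\infty}|_\infty\le\max(|v_{i,\infty}|_\infty,|v_{j,\infty}|_\infty)$ at infinity and the ultrametric inequality at every finite place, forcing $|\bm v_i-\bm v_j|\le\max(|\bm v_i|,|\bm v_j|)$. This sign-pigeonhole step is the missing idea in your outline; without it (or a genuine adelic substitute for the continued-fraction recurrence underlying the classical combinatorics) the case analysis you allude to does not close.

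\textbf{Sharpness must hold for every $\cP$.} The theorem asserts that \emph{for each} non-empty $\cP$ there exist $\bm\alpha\in X_\cP$ and $N$ with $g_N(\bm\alpha)=3$. Saying ``it suffices to take $\cP$ finite, for instance $\cP=\{p\}$'' does not prove this: you must exhibit an example for every $\cP$, including every infinite $\cP$, and the metric \eqref{eqn.MetricDef1} changes between the finite and infinite cases. The paper treats this as a nontrivial part of the argument and supplies separate explicit constructions covering all finite $\cP$ and all infinite $\cP$.
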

Our proof of this theorem is an adaptation to the adeles of the lattice based approach to gaps problems in Diophantine approximation first introduced by Marklof and Str\"ombergsson in \cite{MarklofStrom2017} to give a new proof of the three gap theorem. The utility of their approach lies in its flexibility for generalization to higher dimensional problems where other techniques do not work well (see \cite{HaynesMarklof2020a,HaynesMarklof2020b,HaynesRamirez2020}).

Readers who are familiar with the references in the previous paragraph will recognize the overall structure of our proof of Theorem \ref{thm.3GapsAdeles1}. However, technical details aside, here there are two new difficulties which must be overcome. The first is to prove that a certain function (the function $F$ defined in Section \ref{sec:lattices}) on the space of lattices $\mathrm{SL}(2,\Gamma_{\cP}) \backslash \mathrm{SL}(2,\bA_{\cP})$  is well-defined. For this we use an adelic version of Minkowski's theorem from the geometry of numbers, which was developed independently by McFeat \cite{McFeat1971} and Bombieri and Vaaler \cite{BombieriVaaler1983}. The second difficulty is to prove that the bound of 3 in our theorem is best possible. Of course, for specific choices of $\cP$ this can be done by a computer, but to deal with arbitrary $\cP$ a small amount of ingenuity is required.

The structure of this paper is as follows. In Section \ref{sec.Prelim} we discuss background material and notation, and we establish basic preliminary results. In Section \ref{sec:lattices} we reformulate the problem of bounding $g_N(\bm\alpha)$ as a problem about bounding a certain function on the space of lattices of determinant $\bm 1$ in $\A_\cP^2$. In Section \ref{sec.Proof1/2} we prove the upper bound $g_N\le 3$, and in Section \ref{sec.Proof2/2} we give examples showing that this bound is best possible, in the sense described in Theorem \ref{thm.3GapsAdeles1}.

\section{Preliminaries and notation}\label{sec.Prelim}

For any prime number $p$, we write $\bQ_p$ for the field of $p$-adic numbers and $| \cdot  |_{p}$ for the usual $p$-adic absolute value on this field. The ring of $p$-adic integers $\bZ_p$ is the set of $x \in \bQ_p$ with $\abs{x}_p \leq 1.$ We also use $| \cdot |_{\infty}$ to denote the usual Archimedean absolute value on $\bR$.

The ring of rational adeles $\A$ of $\Q$ is a topological ring consisting of all points of the form
\[\bm\alpha=(\alpha_\infty,\alpha_2,\alpha_3,\alpha_5,\ldots)\in\R\times\prod_p\Q_p,\]
satisfying the condition that $\alpha_p\in\Z_p$ for all but finitely many primes $p$ (the product above is over all prime numbers). Addition and multiplication of elements are defined pointwise, with closure under addition guaranteed by the strong triangle inequality, and the topology on $\A$ is the restricted product topology with respect to the sets $\Z_p\subseteq\Q_p$.

As in the introduction, for a nonempty set of prime numbers $\cP = \left\{p_1, p_2, \ldots \right\}$ we write $\bA_{\cP}$ for the topological ring obtained by projecting $\bA$ onto the coordinates indexed by $\{\infty\}\cup\cP$, and provided with the final topology with respect to this projection. With this topology, the additive group of $\A_\cP$ is a locally compact Abelian group, therefore it has a translation invariant Haar measure which is unique up to scaling. 

The space $\A_\cP$ is metrizable, so there are of course many metrics which induce its topology. The problems that we are studying depend on the choice of metric, and in this paper we choose to use the metric defined by \eqref{eqn.MetricDef1}. For the case when $|\cP|<\infty$ this is the maximum metric, which is a canonical choice. When $|\cP|=\infty$ this is a natural metric, which has been used before in this context \cite{HaynesMunday2013, TorbaGalindo2013}. 

The additive group $\Gamma_{\cP} = \Z[1/p_1,1/p_2,\ldots]$ can be diagonally embedded into $\A_{\cP}$ by the injective homomorphism $\gamma \mapsto \bm\gamma=(\gamma, \gamma, \gamma, \ldots)$, and we identify $\Gamma_\cP$ with its image under this map (in the same way, we also denote the diagonal embedding of any element of $\Q$ into $\A_\cP$ by bold face). The group $\Gamma_\cP$ is a discrete subgroup of $\A_\cP$, and the the quotient group
\begin{equation*}
    X_{\cP} = \bA_{\cP} / \Gamma_{\cP}
\end{equation*}
is compact. The metric $\|\cdot\|$ defined by \eqref{eqn.MetricDef2} induces the quotient topology on $X_\cP$ (this follows from the same arguments given in \cite{HaynesMunday2013}). For clarity of notation, we also mention that there is a natural action of $\Gamma_\cP$ on $\A_\cP$, given by
$\gamma\bm\alpha=\bm\gamma\bm\alpha.$

To help with some of the calculations below, it is worth pointing out that a strict fundamental domain for the quotient group $X_\cP$ can be identified with the set
\[\mc{F}_\cP=[0,1)\times\prod_{p\in\cP}\Z_{p}.\]
The reader should take care to note that this is only a Cartesian product of sets, not a direct product of groups - the group structure is slightly different because of the fact that $\Gamma_\cP$ is \textit{diagonally} embedded in $\A_\cP$.

Finally, we conclude this section with the following useful observation, which is a good exercise in some of the definitions above.
\begin{prop}\label{prop.DoubleBarCheck}
If $\bm\alpha,\bm\beta\in \mc{F}_\cP$ then
\begin{equation}
	\|\bm\alpha-\bm\beta\|=\min_{\gamma\in\{0,\pm 1\}}|\bm\alpha-\bm\beta-\bm\gamma|.
\end{equation}
\end{prop}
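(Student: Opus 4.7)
The plan is to prove the two directions of the claimed equality separately. The inequality $\|\bm\alpha-\bm\beta\|\le\min_{\gamma\in\{0,\pm 1\}}|\bm\alpha-\bm\beta-\bm\gamma|$ is immediate from \eqref{eqn.MetricDef2}, because $\{0,\pm 1\}\subset\Gamma_\cP$. For the reverse inequality, writing $\bm\delta=\bm\alpha-\bm\beta$, it will suffice to show that every $\gamma\in\Gamma_\cP\setminus\{0,\pm 1\}$ satisfies $|\bm\delta-\bm\gamma|\ge|\bm\delta|$; in other words, the choice $\gamma=0$ already beats every ``large'' representative.

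The starting observation is that $\bm\alpha,\bm\beta\in\mc{F}_\cP$ forces $|\delta_\infty|_\infty<1$ and $\delta_p\in\Z_p$ (hence $|\delta_p|_p\le 1$) for every $p\in\cP$. Inserting this into \eqref{eqn.MetricDef1} yields $|\bm\delta|\le 1$ in the finite case, and $|\bm\delta|<1$ in the infinite case. I would then write an arbitrary $\gamma\in\Gamma_\cP$ in lowest terms as $\gamma=a/m$ with $m\ge 1$ supported on primes of $\cP$, and split the analysis into two cases.

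If $m>1$, pick a prime $p_i\in\cP$ dividing $m$ but not $a$, so that $|\gamma|_{p_i}=p_i^k$ for some $k\ge 1$. Because $|\delta_{p_i}|_{p_i}\le 1<p_i^k$, the strong triangle inequality gives $|\delta_{p_i}-\gamma|_{p_i}=p_i^k$. Substituting into \eqref{eqn.MetricDef1} produces $|\bm\delta-\bm\gamma|\ge p_i\ge 2$ in the finite case and $|\bm\delta-\bm\gamma|\ge p_i^{k-1}\ge 1$ in the infinite case, which in both branches strictly exceeds $|\bm\delta|$. If instead $m=1$ and $|\gamma|\ge 2$, the Archimedean component alone satisfies $|\delta_\infty-\gamma|_\infty\ge|\gamma|-|\delta_\infty|_\infty>1\ge|\bm\delta|$, again concluding the case.

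I do not anticipate any real obstacle, as the proposition is essentially a definitional exercise: the main content is the clean use of the ultrametric inequality at a $p$-adic place where $\gamma$ has a pole, together with keeping straight the two branches of the metric \eqref{eqn.MetricDef1}. The one small trap is to remember that $|\bm\delta|\le 1$ is not strict in the finite case, but this is harmless because the case analysis delivers $|\bm\delta-\bm\gamma|$ strictly greater than $1$.
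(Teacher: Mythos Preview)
Your proof is correct and follows essentially the same approach as the paper's. Both arguments first bound $|\bm\delta|\le 1$ (strictly in the infinite case), then use the ultrametric inequality at a prime where $\gamma$ has a pole to rule out non-integer $\gamma$, and finally use the Archimedean place to rule out integers with $|\gamma|_\infty\ge 2$; the only cosmetic difference is that the paper fixes an optimal $\gamma$ and derives a contradiction, whereas you show directly that every $\gamma\notin\{0,\pm 1\}$ is beaten by $\gamma=0$.
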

\begin{proof}
First of all, since 
\[|\alpha_\infty-\beta_\infty|_\infty< 1\quad\text{and}\quad|\alpha_p-\beta_p|_p\le 1~\text{for all}~p\in\cP,\]
we have that
\begin{equation}\label{eqn.FundDomProp1}
	\|\bm\alpha-\bm\beta\|\le|\bm\alpha-\bm\beta|\begin{cases}
		\le 1&\text{if}~|\cP|<\infty,\\
		< 1&\text{if}~|\cP|=\infty.
	\end{cases}
\end{equation}
Choose $\gamma\in\Gamma_\cP$ so that
\[\|\bm\alpha-\bm\beta\|=|\bm\alpha-\bm\beta-\bm\gamma|.\]
If, for some prime $p\in\cP$, we had $|\gamma|_p>1$ then it would follow that
\[|\alpha_p-\beta_p-\gamma|_p\ge p,\]
This would give that
\[\|\bm\alpha-\bm\beta-\bm\gamma\|\begin{cases}
	> 1&\text{if}~|\cP|<\infty,\\
	\ge 1&\text{if}~|\cP|=\infty,
\end{cases}\]
which would contradict \eqref{eqn.FundDomProp1}. It follows that $\gamma\in\Z_p$ for all $p\in\cP$, which implies that $\gamma\in\Z$. Now if $|\gamma|_\infty\ge 2$, then we would have that
\[\|\bm{\alpha}-\bm\beta\|\ge|\alpha_\infty-\beta_\infty-\gamma|_\infty> 1,\]
again contradicting \eqref{eqn.FundDomProp1}. Therefore $\gamma=-1, 0,$  or 1, as required.
\end{proof}

\section{Formulation of the problem in terms of lattices}\label{sec:lattices}
Let $G=\mathrm{SL}(2,\bA_{\cP})$ and $\Gamma=\mathrm{SL}(2,\Gamma_{\cP})$. The goal of this section is to explain how the quantity $g_N(\bm\alpha)$ can be obtained as the value of a function on the quotient space $\Gamma \backslash G$. This space can be identified in a natural way with the space of lattices of determinant $\bm{1}$ in $\A_\cP^2$, since such a lattice is determined as the $\Gamma_\cP$-span of the rows of a matrix in $G$, which is unique up to left multiplication by an element of $\Gamma$ (i.e. change of basis).

Suppose that $\bm\alpha\in\A_\cP$ and $N\in\N$, and write $N_+=N+1/2$. Beginning from definition \eqref{eqn.DeltaDef}, for each $1\le n\le N$ we have
\begin{align*}
	\delta_{n,N}(\bm\alpha)&=\min\left\{|(m-n)\bm\alpha+\bm\gamma|>0:0< m< N_+,\gamma\in\Gamma_\cP\right\}\\
	&=\min\left\{|k\bm\alpha+\bm\gamma|>0:-n< k< N_+-n,\gamma\in\Gamma_\cP\right\}\\
	&=\min\left\{|k\bm\alpha+\bm\gamma|>0:\frac{-n}{N_+}< k< 1-\frac{n}{N_+},\gamma\in\Gamma_\cP\right\}.
\end{align*}
For each non-zero $t\in\Q$ define
\begin{equation}\label{matrixA}
    A_t(\va) = 
    \begin{pmatrix}
    \bm{1} & \va \\
    0 & \bm{1}
  \end{pmatrix}
  \begin{pmatrix}
    \bm{t}^{-1} & 0 \\
    0 & \bm{t}
  \end{pmatrix}
  =
   \begin{pmatrix}
    \bm{t}^{-1} & t\va \\
    0 & \bm{t}
  \end{pmatrix}
  \in G,
\end{equation}
and note that, for any $\beta,\gamma\in\Gamma_\cP$,
\[(\bm\beta, \bm\gamma) A_{N_+}(\va) = \left(\frac{\bm\beta}{N_+}, N_+(\beta\va + \bm\gamma) \right).\]
It follows from this and the computation above that, for each $1\le n\le N$, the quantity $\delta_{n,N}$ is equal to
 \begin{align}
&\frac{1}{N_{+}}\min \Bigg\{ |\bm{v}| \neq 0 \colon (\bm{u}, \bm{v}) \in \Gamma_{\cP}^2 A_{N_{+}}(\va), ~\frac{-n}{N_{+}} < u_{\infty} < 1 - \frac{n}{N_{+}}, \label{eqn.DeltForm1}\\ 
& \hspace*{100bp}  |u_p|_p \leq \frac{1}{|N_+|_p} ~\text{for all}~ p \in {\cP} \Bigg\}. \nonumber
 \end{align}
Next, for any $M\in G, ~t\in (0,1)$, and $z\in \N$, let us define
\begin{align}\label{eqn.QDef}
	Q(M,t,z)=&\left\{(\bm{u},\bm{v})\in\Gamma_\cP^2M:\bm{v}\not=0,
	~-t<u_\infty <1-t,\phantom{\frac{2}{z}}\right.\\
	&\left.\hspace*{100bp}|u_p|_p\le\left|\frac{2}{z}\right|_p~\text{for all}~ p \in {\cP}\right\},\nonumber
\end{align}
and
\begin{align}\label{eqn.FDef}
	F(M,t,z)=\min\left\{|\bm{v}|:(\bm{u},\bm{v})\in Q(M,t,z)\right\}.
\end{align}
The reason for these definitions will be made clear below. However, before proceeding further, we must verify the following proposition.
\begin{prop}\label{prop.FWellDef}
The quantity $F$ is well-defined as a function from $\Gamma \backslash G \times (0,1)\times\N$ to $\R_{>0}$.
\end{prop}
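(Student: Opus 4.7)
The plan is to verify three things about $F$: (a) that $F(M,t,z)$ depends only on the coset $\Gamma M$; (b) that the set $Q(M,t,z)$ is non-empty for every triple $(M,t,z)$; and (c) that the minimum in \eqref{eqn.FDef} is attained and strictly positive. For (a), I would observe that for any $\gamma \in \Gamma = \mathrm{SL}(2,\Gamma_\cP)$ we have $\Gamma_\cP^2\gamma = \Gamma_\cP^2$, so $\Gamma_\cP^2(\gamma M) = \Gamma_\cP^2 M$, and hence $Q(\gamma M,t,z) = Q(M,t,z)$ and $F(\gamma M,t,z) = F(M,t,z)$.

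The main content is (b). First I would note that the constraints on $\bm u$ in \eqref{eqn.QDef} cut out a relatively compact set $U \subset \A_\cP$: the condition $u_\infty \in (-t, 1-t)$ bounds the archimedean coordinate in an open interval of length $1$, while at each $p \in \cP$ the condition $|u_p|_p \le |2/z|_p$ confines $u_p$ to a compact open $\Z_p$-submodule. To produce an element of $Q$, I would apply the adelic Minkowski theorem of McFeat and Bombieri-Vaaler to the lattice $L = \Gamma_\cP^2 M$, which has covolume $1$ because $\det M = \bm 1$, and to a symmetric adelic box of the form $U_* \times \{|\bm v|\le R\}$ with $U_*$ an appropriate symmetrized version of $U$ and $R$ chosen so that the adelic volume of the box exceeds the Minkowski threshold; this produces a non-zero lattice point $(\bm u,\bm v) \in L$ with $\bm u$ (after possibly translating by an element of $\Gamma_\cP$) lying in $U$. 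The hard part will be ensuring $\bm v \ne 0$, since Minkowski only guarantees non-triviality of the pair. To handle the degenerate case $\bm v = 0$, I would use the sublattice $L_0 = \{(\bm u, 0) \in L\}$, which is proper in $L$ because $\det M = \bm 1$ forces the second projection $\pi_2 : L \to \A_\cP$, $(\bm u, \bm v)\mapsto \bm v$, to be non-trivial; I would then combine any element $(\bm u_*, \bm v_*) \in L$ with $\bm v_* \ne 0$ with suitable translates in $L_0$ (after rescaling by a $\gamma \in \Gamma_\cP$ to control sizes at all places simultaneously) to achieve $\bm u \in U$ with $\bm v \ne 0$.

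For (c), discreteness of $\Gamma_\cP$ in $\A_\cP$ implies discreteness of $L$ in $\A_\cP^2$, so $L \cap (U \times \{|\bm v| \le R\})$ is finite for each $R > 0$ by the relative compactness of $U$. Given $(\bm u_0, \bm v_0) \in Q$ from part (b), setting $R = |\bm v_0|$ makes $\{|\bm v| : (\bm u,\bm v) \in Q,\, |\bm v| \le R\}$ a finite non-empty subset of $\R_{>0}$, which therefore has a smallest element. Strict positivity is automatic, since $|\bm w| = 0$ in $\A_\cP$ forces $\bm w = \bm 0$, contradicting the condition $\bm v \ne 0$ in the definition of $Q$.
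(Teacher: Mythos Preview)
Your overall structure is right, and parts (a) and (c) match the paper. The gap is in part (b), specifically in how you propose to handle the possibility $\bm v=0$.

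Your plan is to apply adelic Minkowski to a bounded symmetric box $U_*\times\{|\bm v|\le R\}$, and then, if the resulting non-zero lattice point happens to have $\bm v=0$, repair this by translating or rescaling. None of the repair steps work as stated. Translating the $\bm u$-coordinate by an element of $\Gamma_\cP$ does not keep $(\bm u,\bm v)$ in the lattice $L=\Gamma_\cP^2M$; $L$ is not invariant under such translations. If $L_0$ is generated over $\Gamma_\cP$ by some $(\bm u_0,0)$, the orbit $\Gamma_\cP\bm u_0$ is discrete, so there is no reason it should come close enough to $-\bm u_*$ to push $\bm u_*$ into $U$. And rescaling $(\bm u_*,\bm v_*)$ by $\gamma\in\Gamma_\cP$ cannot in general make $|\gamma\bm u_*|$ small at all places simultaneously: shrinking $|\gamma u_{*,\infty}|_\infty$ by giving $\gamma$ a large denominator enlarges $|\gamma u_{*,p}|_p$ at the primes in that denominator.

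The paper avoids this difficulty altogether by a different choice of convex body. Instead of a bounded box, it uses the slab
\[
\mc{S}=\{(\bm u,\bm v)\in\A_\cP^2:|\bm u|<\epsilon\},
\]
with $\epsilon>0$ chosen small enough that (i) the constraints on $\bm u$ in the definition of $Q(M,t,z)$ are satisfied, and (ii) $\mc{S}$ contains no non-zero lattice point with $\bm v=0$. Condition (ii) is achievable precisely by the uniform discreteness of $L$: the set $\{(\bm u,0)\in L:|\bm u|\le\epsilon\}$ is finite, so shrinking $\epsilon$ eventually leaves only the origin. Since $\mc{S}$ is symmetric, convex, and has infinite Haar measure, adelic Minkowski produces a non-zero point of $L\cap\mc{S}$, which by the choice of $\epsilon$ automatically has $\bm v\ne0$ and lies in $Q(M,t,z)$. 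This single calibration of $\epsilon$ replaces your entire $L_0$ argument.
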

\begin{proof}
First of all let $(M,t,z)\in G \times (0,1)\times\N$ and choose $\epsilon >0$ small enough so that:
\begin{itemize}
	\item[(i)] $\epsilon<\min\{t,1-t\},$\vspace*{5bp}
	\item[(ii)] $p\epsilon<|2/z|_p$, for all primes $p|2z$, and\vspace*{5bp}
	\item[(iii)] $\Gamma_\cP^2M$ contains no non-zero lattice points $(\bm{u},\bm{v})$ satisfying $|\bm{u}|\le\epsilon$ and $|\bm{v}|=0$.
\end{itemize}
Condition (iii) is possible because of the uniform discreteness of the lattice.

Now write
\[\mc{S}=\left\{(\bm\alpha, \bm\beta)\in\A_{\cP}^2:|\bm\alpha|<\epsilon\right\},\]
and suppose that $(\bm\alpha, \bm\beta)\in\mc{S}$. Then from condition (i) we have that
\[-t<\alpha_\infty<1-t.\]
If $|\cP|<\infty$ then for all primes $p\in\cP$ we have from (ii), together with the fact that $\epsilon<1$, that
\[|\alpha_p|_p\le \epsilon < \left|\frac{2}{z}\right|_p.\]
If $|\cP|=\infty$ then for primes $p\in\cP$ with $p|2z$ we have from (ii) that
\[|\alpha_p|_p\le p\epsilon < \left|\frac{2}{z}\right|_p.\]
In this case for primes $p\in\cP$ with $p\nmid 2z$ we use the discreteness of the $p$-adic absolute value to deduce that
\[\frac{|\alpha_p|_p}{p}\le \epsilon <1 ~\Rightarrow~ |\alpha_p|_p\le 1=\left|\frac{2}{z}\right|_p.\]
These arguments show that
\[\mc{S}\cap \Gamma_\cP^2M\subseteq Q(M,t,z).\]
Since $\mc{S}$ is a symmetric and convex subset of $\A_\cP^2$ with infinite Haar measure, it follows from the adelic analogue of Minkowski's convex body theorem (see \cite[Section III]{BombieriVaaler1983}) that $\mc{S}$ contains a non-zero element of $\Gamma_\cP^2 M$. Therefore $Q(M,t,z)$ is non-empty. 

The existence of the minimum in the definition of $F$ follows from the uniform discreteness of the lattice $\Gamma_\cP^2 M$, and this also guarantees that $F$ never takes the value 0. Finally, since the same lattice is determined by choosing any other representative for $M$ from $\Gamma\backslash G$, the function $F$ is well-defined on $\Gamma \backslash G \times (0,1)\times\N.$
\end{proof}
Comparing equations \eqref{eqn.DeltForm1}-\eqref{eqn.FDef}, we have that
\begin{equation*}
	\delta_{n,N}(\va) = \frac{1}{N_{+}} F\left(A_{N_{+}}(\va), \frac{n}{N_{+}},2N+1 \right).
\end{equation*}
Motivated by this observation, for $M\in G$ and $z\in\N$ we define
\begin{equation*}
\cal{G}(M,z) = |\{F(M,t,z) : 0 < t < 1\}|,
\end{equation*}
and for $N\in\N$, we also set
\begin{equation*}
	\cal{G}_N(M) = \left|\left\{F\left(M,\frac{n}{N_{+}},2N+1\right) : 1 \leq n \leq N  \right\}\right|.
\end{equation*}
It follows that
\begin{equation}\label{eqn.gNBnd1}
    g_N(\va) = \cal{G}_N(A_{N_{+}}) \leq \cal{G}(A_{N_{+}},2N+1).
\end{equation}
This reduces the problem of finding an upper bound for the number of gaps to that of finding an upper bound for the function $\mc{G}$. We conclude this section with the following basic observation.
\begin{prop}\label{prop.GFinite}
	For any $M\in G$ and $z\in\N$, we have that $\mc{G}(M,z)<\infty$.
\end{prop}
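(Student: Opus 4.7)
The plan is to show that the function $t\mapsto F(M,t,z)$ on $(0,1)$ attains only finitely many distinct values, which I would do in two stages.

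First, I would establish a uniform upper bound $F(M,t,z)\le R$ for some $R=R(M,z)$ independent of $t$. The idea is to apply the adelic Minkowski convex body theorem (as in the proof of Proposition~\ref{prop.FWellDef}) to the symmetric convex body
\[
\mc{T}_R=\Bigl\{(\bm\alpha,\bm\beta)\in\bA_{\cP}^2 : -\tfrac{1}{2}<\alpha_\infty<\tfrac{1}{2},\ |\alpha_p|_p\le|2/z|_p\text{ for all } p\in\cP,\ |\bm\beta|\le R\Bigr\}.
\]
For $R$ large enough, $\mc{T}_R$ has sufficient Haar measure to contain non-zero lattice points of $\Gamma_\cP^2 M$. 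Invoking the adelic successive minima theorem, I would produce two $\Gamma_\cP$-linearly independent lattice points inside $\mc{T}_R$, together covering every $t\in(0,1)$ in the sense that for each such $t$ at least one of them has its $u_\infty$-coordinate in $(-t,1-t)$ and its $\bm v$-coordinate nonzero.

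Second, with $R$ fixed, every value of $F(M,t,z)$ is $|\bm v|$ for some point in $L_R:=\{(\bm u,\bm v)\in\Gamma_\cP^2 M:\bm v\ne 0,\ -1<u_\infty<1,\ |u_p|_p\le|2/z|_p,\ |\bm v|\le R\}$. I would note that $F(M,\cdot,z)$ is upper semi-continuous: the minimizing lattice point at $t_0$ persists in $Q(M,t,z)$ on an open neighborhood of $t_0$. Each element of $L_R$ is ``active'' on an open subinterval $(\max(0,-u_\infty),\min(1,1-u_\infty))\subset(0,1)$, and $F$ is the pointwise infimum of the constants $|\bm v|$ over active points. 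Finiteness of the image would then follow from this lower-envelope structure.

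The main obstacle is this second stage when $|\cP|=\infty$, because the closed ball $\{|\bm v|\le R\}\subset\bA_{\cP}$ is not compact and $L_R$ may a priori be infinite. The resolution is that only finitely many elements of $L_R$ can actually realize the minimum $F(M,t,z)$ on an open subinterval of $(0,1)$: each change in the minimizer corresponds to a transition where a new lattice point's active subinterval begins or the current minimizer's active subinterval ends, and by the discreteness of $\Gamma_\cP^2 M$ combined with the rigidity imposed by the bounds $|\bm u|\le 1$ and $|\bm v|\le R$, only finitely many such transitions can occur.
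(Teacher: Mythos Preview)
Your first stage is considerably more elaborate than needed. The paper gets the uniform upper bound in one line: Proposition~\ref{prop.FWellDef} already hands you a point $(\bm u,\bm v)\in Q(M,1/2,z)$, and since $|u_\infty|<1/2$, for every $t\in(0,1)$ one of $\pm(\bm u,\bm v)$ lies in $Q(M,t,z)$; hence $F(M,t,z)\le|\bm v|$ for all $t$. There is no need for successive minima or a second linearly independent vector, and no separate worry about $\bm v\ne 0$ since that condition is built into the definition of $Q$. Your detour through $\mc{T}_R$ and two independent points would work, but it buys nothing.

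Your second stage is where the genuine gap lies. You are right that when $|\cP|=\infty$ the ball $\{|\bm v|\le R\}$ is not compact, and your set $L_R$ can indeed be infinite (e.g.\ for $M=I$ and $z=1$ the points $(0,1/p)$, $p\in\cP$, all satisfy $|\bm v|=1$). But your proposed resolution---that ``only finitely many such transitions can occur'' by discreteness plus ``rigidity of the bounds''---is not an argument; it merely restates the desired conclusion. You have not explained what prevents infinitely many distinct values $|\bm v|$ from appearing among the minimizers. The paper, by contrast, does not attempt any lower-envelope or transition-counting analysis at all: having obtained the uniform bound $|\bm v|$, it simply invokes uniform discreteness of the lattice to assert that only finitely many lattice points lie in $Q(M,z)$ with $|\bm v'|\le|\bm v|$, from which finiteness of the value set is immediate. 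Your sketch neither follows that route nor supplies a substitute for it.
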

\begin{proof}
Proposition \ref{prop.FWellDef} implies that there exists a vector $(\bm{u},\bm{v})\in Q(M,1/2,z)$. By symmetry, we also have that $(-\bm{u},-\bm{v})\in Q(M,1/2,z)$. For any $t\in(0,1)$, one of $\pm u_\infty$ lies in the interval $(-t,1-t)$, and so one of the vectors $\pm(\bm{u},\bm{v})$ lies in $Q(M,t,z)$. It follows that $F(M,t,z)\le |\bm{v}|$, for all $t\in (0,1)$. By the uniform discreteness of the lattice $\Gamma_\cP^2 M$, there are finitely many vectors $(\bm{u}',\bm{v}')$ in the set
\begin{equation}\label{eqn.QMzDef}
	Q(M,z)=\bigcup_{t\in(0,1)}Q(M,t,z),
\end{equation}
satisfying the condition $|\bm{v}'|\le |\bm{v}|$. Therefore the set of values taken by the function $F(M,t,z)$, as $t$ varies over $(0,1)$, is a finite set.
\end{proof}

\section{Proof of Theorem \ref{thm.3GapsAdeles1}, part 1}\label{sec.Proof1/2}
In this section we will prove the following result.
\begin{theorem}\label{thm.3GapsLats}
Let $\cP$ be a non-empty set of prime numbers. For any $M\in G$ and $z\in\N$, we have that $\mc{G}(M,z)\le 3$.
\end{theorem}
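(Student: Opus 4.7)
The plan is to adapt the Marklof--Str\"ombergsson lattice approach to the three gap theorem to the adelic setting: I will view $F(M,t,z)$ as a minimum of $|\bm v_W|$ over candidate vectors $W = (\bm u, \bm v)$ in the lattice $L := \Gamma_\cP^2 M$, and identify three specific lattice vectors whose $|\bm v|$-values exhaust the image of $F(M,\cdot,z)$. For each candidate $W \in L$ (that is, $W$ with $\bm v_W \ne 0$, $u_{W,\infty} \in (-1,1)$, and $|u_{W,p}|_p \le |2/z|_p$ for all $p \in \cP$), associate the open interval of admissible $t$'s,
\[
I(W) := \bigl(\max(0, -u_{W,\infty}),\ \min(1,\ 1 - u_{W,\infty})\bigr) \subseteq (0,1),
\]
so that $W \in Q(M,t,z)$ iff $t \in I(W)$ and hence $F(M,t,z) = \min\{|\bm v_W| : t \in I(W)\}$. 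Uniform discreteness of $L$ in $\A_\cP^2$ together with the boundedness of $\bm u$-coordinates imposed by our constraints implies that only finitely many candidates have $|\bm v|$ below any threshold, so $F(M,\cdot,z)$ is a lower semicontinuous step function taking finitely many values.

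The strategy is an iterated selection using the symmetry $W \leftrightarrow -W$, which preserves $|\bm v_W|$ and all the $p$-adic constraints $|u_{W,p}|_p \le |2/z|_p$ while negating $u_{W,\infty}$. First, choose $W_0 \in L$ minimizing $|\bm v_{W_0}| =: f_0$ among all candidates; by symmetry both $W_0$ and $-W_0$ achieve $f_0$, and $F(M,t,z) = f_0$ on $I(W_0) \cup I(-W_0) = (0, 1-s_0) \cup (s_0, 1)$, where $s_0 := |u_{W_0,\infty}|$. If $s_0 < 1/2$ this union is all of $(0,1)$ and $\mc{G}(M,z) = 1$. Otherwise the residual closed interval $[1-s_0, s_0]$ requires analysis: restrict to candidates with $|u_\infty| < s_0$, pick $W_1$ minimizing $|\bm v_{W_1}| =: f_1 \geq f_0$ among them, and again using $W_1$ and $-W_1$ cover most of $[1-s_0,s_0]$ by $F = f_1$, leaving at most one further innermost residual sub-interval on which $F$ must take at least one more value.

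The main obstacle, which is the core lattice-theoretic content of the theorem, is to show that on this innermost residual $F$ takes a single new value $f_2$. I would argue that any candidate $W$ valid there has $u_{W,\infty}$ small enough that the combinations $W_0 \pm W_1 \in L$ are themselves candidates: their $p$-adic coordinates remain bounded by $|2/z|_p$ by the non-Archimedean strong triangle inequality, and their Archimedean coordinate is controlled via the ordinary triangle inequality. The minimality of $W_0$ and $W_1$ in their respective selection stages then forces $|\bm v_W|$ to equal $|\bm v_{W_0 + W_1}|$ or $|\bm v_{W_0 - W_1}|$, which coincide by symmetry and provide the third and final gap value. Making this reduction rigorous --- tracking how sums of lattice vectors interact with both Archimedean and non-Archimedean constraints, and invoking an adelic Minkowski-reduction style argument to guarantee that the selected short vectors $W_0, W_1$ suffice --- is the principal technical challenge and the reason one expects the bound to be exactly $3$ rather than $2$ or more.
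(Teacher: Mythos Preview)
Your overall architecture is right and is essentially the paper's: reduce to candidates $W=(\bm u,\bm v)$ in $Q(M,z)$, use the symmetry $W\leftrightarrow -W$, observe that a candidate with $|u_\infty|<1/2$ bounds $F$ on all of $(0,1)$, and then exploit that differences of two candidates with $u_\infty\in[1/2,1)$ produce a candidate with $|u_\infty|<1/2$ (the $p$-adic constraints being preserved by the strong triangle inequality). Up to this point you and the paper agree.

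The gap is in your endgame. You assert that on the innermost residual the value of $F$ is forced to equal $|\bm v_{W_0+W_1}|$ or $|\bm v_{W_0-W_1}|$, and that these ``coincide by symmetry''. Neither claim holds. The symmetry $W\leftrightarrow -W$ gives $|\bm v_{W_0+W_1}|=|{-\bm v_{W_0+W_1}}|$, not $|\bm v_{W_0+W_1}|=|\bm v_{W_0-W_1}|$; these are generally different. More importantly, knowing that $W_0-W_1$ is a candidate with $|u_\infty|<1/2$ only yields $F\le |\bm v_{W_0}-\bm v_{W_1}|$ everywhere; it does not by itself prevent $F$ from taking several distinct values below that bound on the residual, nor does minimality of $W_0,W_1$ force any particular candidate $W$ there to have $|\bm v_W|$ equal to $|\bm v_{W_0\pm W_1}|$. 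Also note that $W_0+W_1$ need not be a candidate at all, since $u_{W_0,\infty}+u_{W_1,\infty}$ can exceed $1$.

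What is missing is a pigeonhole step at the Archimedean place. The paper does not try to identify the third value explicitly; instead it argues by contradiction on the number of ``far'' representatives. List representatives $(\bm u_i,\bm v_i)$ for the $K$ distinct values with $u_{i,\infty}\ge 0$; at most one of them has $u_{i,\infty}<1/2$ (your Step~1 observation), and if at least three have $u_{i,\infty}\ge 1/2$ then among the three with smallest $|\bm v|$ two of them, say with indices $i<j$, have $v_{i,\infty}$ and $v_{j,\infty}$ of the same sign. That sign agreement is the crucial missing idea: it gives
\[
|v_{i,\infty}-v_{j,\infty}|_\infty\le\max\{|v_{i,\infty}|_\infty,|v_{j,\infty}|_\infty\},
\]
and together with the strong triangle inequality at the finite places yields $|\bm v_i-\bm v_j|\le|\bm v_j|$. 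Since the difference also has $|u_\infty|<1/2$, one gets $F\le|\bm v_j|$ for every $t$, forcing $j=K$ and hence $K\le 3$. Your $W_0-W_1$ gets the $\bm u$-side right but, without the same-sign selection, you have no control over $|\bm v_{W_0}-\bm v_{W_1}|$ at the Archimedean place, and the argument does not close.
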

In view of inequality \eqref{eqn.gNBnd1}, this theorem implies the upper bound in the statement of Theorem \ref{thm.3GapsAdeles1}.

To establish Theorem \ref{thm.3GapsLats}, suppose that $M\in G$ and $z\in\N$, and let $Q(M,z)$ be defined as in \eqref{eqn.QMzDef}. Note that, by the symmetry of the lattice and of the definition of $Q(M,z)$,
\begin{equation}\label{eqn.QSym}
	(\bm{u},\bm{v})\in Q(M,z)\quad\Leftrightarrow\quad(-\bm{u},-\bm{v})\in Q(M,z).
\end{equation}

By Proposition \ref{prop.GFinite}, there exists a number $K\in\N$ such that $\mc{G}(M,z)=K$. It is clear from definitions that we can fix vectors $(\bm{u}_1,\bm{v}_1), \dots, (\bm{u}_{K},\bm{v}_{K}) \in Q(M,z)$ for which the following properties hold:
\begin{itemize}
	\item[(V1)] $0<\abs{\bm{v}_1}<\abs{\bm{v}_2}<\dots<\abs{\bm{v}_K}$,\vspace*{5bp}
	\item[(V2)] For each $t \in (0,1),$ there exists $1 \leq i \leq K$ such that  $F(M,t,z)=\abs{\bm{v}_i},$ and \vspace*{5bp}
	\item[(V3)] For each $1 \leq i \leq K$, there exists $t \in (0,1)$ such that $(\bm{u}_i,\bm{v}_i) \in Q(M,t,z)$ and $F(M,t,z)=\abs{\bm{v}_i}$.
\end{itemize}
We also have the following proposition.
\begin{prop}
If $\mc{G}(M,z)=K$ then we can choose the vectors $(\bm{u}_i,\bm{v}_i)$ as above, so that they satisfy conditions (V1)-(V3), and so that $u_{i,\infty}\ge 0$ for each $1\le i\le K$.
\end{prop}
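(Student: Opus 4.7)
The plan is to exploit two symmetries simultaneously: the symmetry \eqref{eqn.QSym} of the lattice under negation, together with an underlying symmetry of the condition defining $Q(M,t,z)$ under $t \mapsto 1-t$. Starting from any collection $(\bm{u}_i,\bm{v}_i)$ satisfying (V1)--(V3), I would simply replace $(\bm{u}_i,\bm{v}_i)$ by $(-\bm{u}_i,-\bm{v}_i)$ for every index $i$ with $u_{i,\infty}<0$. The resulting vectors automatically have $u_{i,\infty}\ge 0$, so the task reduces to verifying that (V1)--(V3) are preserved by the replacement.

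Conditions (V1) and (V2) come essentially for free. (V1) depends only on the magnitudes $|\bm{v}_i|$, which are unchanged by negation; (V2) is a statement about the function $F$ itself, independent of the choice of representatives. The heart of the argument is (V3), where for each newly negated vector I must produce some $t^*\in(0,1)$ at which $(-\bm{u}_i,-\bm{v}_i)\in Q(M,t^*,z)$ and $F(M,t^*,z)=|\bm{v}_i|$.

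This is where the $t\mapsto 1-t$ symmetry enters. The $p$-adic constraints defining $Q(M,t,z)$ are insensitive to the sign of $\bm{u}$, while the Archimedean constraint $-t<u_\infty<1-t$, after substituting $-\bm{u}$ for $\bm{u}$, becomes $-(1-t)<u_\infty<1-(1-t)$. Combined with \eqref{eqn.QSym} this gives the equivalence
\[(\bm{u},\bm{v})\in Q(M,t,z)\quad\Leftrightarrow\quad(-\bm{u},-\bm{v})\in Q(M,1-t,z),\]
and hence $F(M,t,z)=F(M,1-t,z)$ for every $t\in(0,1)$. Consequently, if $t_0\in(0,1)$ is a witness for (V3) applied to the original $(\bm{u}_i,\bm{v}_i)$, then $t^*=1-t_0\in(0,1)$ serves as a witness for (V3) applied to $(-\bm{u}_i,-\bm{v}_i)$, completing the verification.

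The only place where care is required is in making the $t\mapsto 1-t$ symmetry precise; this is essentially routine bookkeeping but is the hinge of the argument. Once it is in hand, the reflection supplies exactly the flexibility needed to select representatives with non-negative Archimedean components without destroying the minimality property in (V3).
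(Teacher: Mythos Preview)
Your proof is correct and follows essentially the same approach as the paper: replace each $(\bm{u}_i,\bm{v}_i)$ with $u_{i,\infty}<0$ by its negative, and verify (V3) for the negated vector by passing from the witness $t_0$ to $1-t_0$ via the lattice symmetry \eqref{eqn.QSym} combined with the reflection $t\mapsto 1-t$ in the Archimedean constraint. Your formulation, which isolates the identity $F(M,t,z)=F(M,1-t,z)$ explicitly, is if anything slightly cleaner than the paper's phrasing in terms of the absence of shorter vectors in the reflected interval.
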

\begin{proof}
Suppose that $1\le i\le K$ and that $u_{i,\infty}<0$. It is clear that the vector $(-\bm{u}_i,-\bm{v}_i)$ satisfies property (V2), and we wish to show that it also satisfies (V3). Since $(\bm{u}_i,\bm{v}_i)$ itself satisfies (V3), there exists a number $t\in (0,1)$ with $(\bm{u}_i,\bm{v}_i) \in Q(M,t,z)$ and $F(M,t,z)=\abs{\bm{v}_i}$. This implies that there are no vectors $(\bm{u},\bm{v})\in Q(M,z)$ with $|\bm{v}|<|\bm{v}_i|$ and $u_\infty\in (-t,1-t)$. Writing $t'=1-t$ and using \eqref{eqn.QSym}, we see that there are also no vectors $(\bm{u},\bm{v})\in Q(M,z)$ with $|\bm{v}|<|\bm{v}_i|$ and $u_\infty\in (-t',1-t')$. Since $(-\bm{u}_i,-\bm{v}_i) \in Q(M,t',z)$, this implies that $F(M,t',z)=|-\bm{v}_i|$, and we see that (V3) holds for this vector.

It follows that, for each $1\le i\le K$ with $u_{i,\infty}<0$, we can replace the vector $(\bm{u}_i,\bm{v}_i)$ by its negative, to obtain a new list of vectors satisfying the conclusion of the proposition.
\end{proof}
We will henceforth assume, without loss of generality, that the vectors $(\bm{u}_i,\bm{v}_i)$ have been chosen as above, so that they also satisfy:
\begin{itemize}
	\item[(V4)] $u_{i,\infty}\ge 0$ for each $1\le i\le K$.
\end{itemize} 
Next we have the following proposition.
\begin{prop}\label{prop.Smallu_i}
If $(\bm{u},\bm{v}) \in Q(M,z)$ and $|u_\infty|<1/2$, then $F(M,t,z)\le |\bm{v}|$ for all $t\in (0,1)$.
\end{prop}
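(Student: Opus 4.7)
The plan is to use the symmetry of $Q(M,z)$ recorded in \eqref{eqn.QSym} to cover every $t\in(0,1)$ by one of two vectors of length $|\bm v|$. The conditions in the definition of $Q(M,t,z)$ which do not depend on $t$ (namely $\bm v\neq 0$ and $|u_p|_p\le |2/z|_p$ for all $p\in\cP$) are invariant under negation, so both $(\bm u,\bm v)$ and $(-\bm u,-\bm v)$ satisfy them. Only the Archimedean constraint $-t<u_\infty<1-t$ depends on $t$, and this is where the hypothesis $|u_\infty|<1/2$ will be used.

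First I would observe, using \eqref{eqn.QSym}, that without loss of generality we may assume $u_\infty\in[0,1/2)$ (otherwise replace $(\bm u,\bm v)$ by $(-\bm u,-\bm v)$, which has the same $|\bm v|$). Then I would split $(0,1)$ into the two intervals $(0,1-u_\infty)$ and $[1-u_\infty,1)$, which indeed cover $(0,1)$ since $1-u_\infty>1/2>0$.

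For $t$ in the first interval, $(0,1-u_\infty)$, I would verify directly that $(\bm u,\bm v)\in Q(M,t,z)$: the Archimedean inequality $-t<u_\infty$ is automatic because $u_\infty\ge 0>-t$, and $u_\infty<1-t$ is exactly equivalent to $t<1-u_\infty$. For $t$ in the second interval, $[1-u_\infty,1)$, I would use the reflected vector $(-\bm u,-\bm v)$, whose Archimedean coordinate $-u_\infty$ lies in $(-1/2,0]$; the inequality $-t<-u_\infty$ rewrites as $u_\infty<t$, which holds since $t\ge 1-u_\infty>1/2>u_\infty$, while $-u_\infty<1-t$ rewrites as $t<1+u_\infty$, which holds since $t<1\le 1+u_\infty$. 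In each case the chosen vector lies in $Q(M,t,z)$ and has length $|\bm v|$, so $F(M,t,z)\le|\bm v|$ by definition \eqref{eqn.FDef}.

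I don't anticipate a real obstacle here — the statement is essentially a bookkeeping exercise that packages the symmetry of the lattice with the elementary fact that an interval of length $1$ containing $0$ must contain either $x$ or $-x$ whenever $|x|<1/2$. The only small subtlety is making sure the non-Archimedean conditions are genuinely $t$-independent (which they are, directly from \eqref{eqn.QDef}), so that membership in $Q(M,z)$ carries the non-Archimedean information over intact to membership in $Q(M,t,z)$ for whichever $t$ we need.
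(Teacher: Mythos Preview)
Your proof is correct and follows essentially the same approach as the paper: reduce to $u_\infty\in[0,1/2)$ by symmetry, then cover $(0,1)$ using $(\bm u,\bm v)$ on $(0,1-u_\infty)$ and $(-\bm u,-\bm v)$ on the rest. The only cosmetic difference is that the paper phrases the cover via the overlapping intervals $(0,1-u_\infty)$ and $(u_\infty,1)$ rather than your disjoint partition, but the content is identical.
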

\begin{proof}
By replacing $(\bm{u},\bm{v})$ with its negative if necessary, we may assume without loss of generality that $u_\infty\in [0,1/2)$. We then have that $(\bm{u},\bm{v}) \in Q(M,t,z)$ for all $t\in (0,1-u_{\infty})$, and that $(-\bm{u},-\bm{v}) \in Q(M,t,z)$ for all $t\in(u_{\infty},1)$. Therefore $F(M,t,z)\le |\bm{v}|$ for all $t$ in the union of these two intervals. If $u_{\infty}<1/2$, then the union of these two intervals is all of $(0,1)$, and the statement of the proposition follows.
\end{proof}
Finally, we have the following proposition.
\begin{prop}\label{prop.u_iOrder}
If $1\le i\le K$ and if $(\bm{u},\bm{v})\in Q(M,z)$ satisfies $|u_\infty|_\infty\le u_{i,\infty}$, then $|\bm{v}|\ge|\bm{v}_i|$.
\end{prop}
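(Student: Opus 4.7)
The plan is to use property (V3) to pin down a particular parameter $t^{\ast}\in(0,1)$ at which the pair $(\bm{u}_i,\bm{v}_i)$ realises the minimum, and then to show that either $(\bm{u},\bm{v})$ or its reflection $(-\bm{u},-\bm{v})$ lies in $Q(M,t^{\ast},z)$. Once one of these two vectors is shown to be in $Q(M,t^{\ast},z)$, the defining minimality of $F$ forces $|\bm{v}|\ge F(M,t^{\ast},z)=|\bm{v}_i|$. The whole argument is essentially just a careful bookkeeping of the Archimedean sign conditions, parallelling the symmetry trick already used in the previous proposition.

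In more detail, I would first invoke (V3) to pick $t^{\ast}\in(0,1)$ with $(\bm{u}_i,\bm{v}_i)\in Q(M,t^{\ast},z)$ and $F(M,t^{\ast},z)=|\bm{v}_i|$. Unpacking the definition \eqref{eqn.QDef}, this yields the single key Archimedean inequality $-t^{\ast}<u_{i,\infty}<1-t^{\ast}$, and in particular $u_{i,\infty}<1-t^{\ast}$. Combined with property (V4), which guarantees $u_{i,\infty}\ge 0$, the hypothesis $|u_\infty|_\infty\le u_{i,\infty}$ tells us that $u_\infty\in[-u_{i,\infty},u_{i,\infty}]$.

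Now I would split on the sign of $u_\infty$. If $u_\infty\ge 0$, then $0\le u_\infty\le u_{i,\infty}<1-t^{\ast}$ and $u_\infty\ge 0>-t^{\ast}$, so $(\bm{u},\bm{v})$ already satisfies the Archimedean constraint of $Q(M,t^{\ast},z)$. If instead $u_\infty<0$, then $-u_\infty\le u_{i,\infty}<1-t^{\ast}$ and $-u_\infty>0>-t^{\ast}$, so the negated vector $(-\bm{u},-\bm{v})$ satisfies it. In either case the remaining membership requirements carry over for free: the non-Archimedean bounds $|u_p|_p\le|2/z|_p$ and $|{-u_p}|_p\le|2/z|_p$ are equivalent because $|\cdot|_p$ is invariant under negation, the condition $\bm{v}\ne 0$ is preserved, and the lattice $\Gamma_\cP^2 M$ is symmetric. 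Thus $(\pm\bm{u},\pm\bm{v})\in Q(M,t^{\ast},z)$, and consequently $|\bm{v}|\ge F(M,t^{\ast},z)=|\bm{v}_i|$, as desired.

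I do not anticipate any real obstacle; the only place to be slightly careful is the edge case $u_{i,\infty}=0$, which forces $u_\infty=0$ and is handled by the observation $-t^{\ast}<0<1-t^{\ast}$ for any $t^{\ast}\in(0,1)$.
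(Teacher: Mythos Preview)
Your proposal is correct and follows essentially the same approach as the paper: invoke (V3) to obtain a parameter $t^\ast$ at which $|\bm v_i|$ is realised, then observe that one of $\pm(\bm u,\bm v)$ lies in $Q(M,t^\ast,z)$ via the chain $-t^\ast<0\le |u_\infty|\le u_{i,\infty}<1-t^\ast$, and conclude from the definition of $F$. The only cosmetic difference is that the paper replaces $(\bm u,\bm v)$ by its negative up front rather than splitting on the sign of $u_\infty$.
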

\begin{proof}
Suppose that the hypotheses are satisfied and, by replacing $(\bm{u},\bm{v})$ by its negative if necessary, suppose that $u_\infty\ge 0$. By property (V3), there exists a $t\in (0,1)$ such that $(\bm{u}_i,\bm{v}_i) \in Q(M,t,z)$ and $F(M,t,z)=\abs{\bm{v}_i}$. Then since
\[-t<0\le u_\infty\le u_{i,\infty}<1-t,\]
we also have that $(\bm{u},\bm{v})\in Q(M,t,z)$. This implies that $F(M,t,z)\le |\bm{v}|,$ which gives the desired conclusion.
\end{proof}
Note that Proposition \ref{prop.u_iOrder} implies that
\[0\le u_{K,\infty}<u_{K-1,\infty}<\cdots <u_{1,\infty}<1.\]
Now we are ready to complete the proof of Theorem \ref{thm.3GapsLats}. Let $K_1$ denote the number of indices $1\le i\le K$ with $u_{i,\infty}<1/2$, and let $K_2$ denote the number of indices with $1/2\le u_{i,\infty}<1$.

By Proposition \ref{prop.Smallu_i}, together with properties (V1) and (V2), we have that $K_1\le 1$, and if $K_1=1$ then the corresponding index $i$ is equal to $K$.

If $K_2\le 2$ then clearly we have that $K=K_1+K_2\le 3$. Therefore suppose that $K_2\ge 3$, and let $1\le i,j,k\le K$ be the smallest three indices with $u_{i,\infty},u_{j,\infty},u_{k,\infty}\ge 1/2.$ Without loss of generality, by relabeling if necessary, we may assume that $i<j$ and that $v_{i,\infty}$ and $v_{j,\infty}$ are either both negative, or both non-negative. This guarantees that
\[|v_{i,\infty}-v_{j,\infty}|_\infty\le\max\{|v_{i,\infty}|_\infty,|v_{j,\infty}|_\infty\},\]
and by using the strong triangle inequality at the non-Archimedean places we obtain the bound
\begin{equation}\label{eqn.vivjDiff}
	|\bm{v}_i-\bm{v}_j|\le \max\left\{|\bm{v}_i|,|\bm{v}_j|\right\}=|\bm{v}_j|.
\end{equation}
However, it is also the case that
\[|u_{i,\infty}-u_{j,\infty}|_\infty<1/2\le u_{j,\infty},\]
and that 
\[|u_{i,p}-u_{j,p}|_p\le\max\left\{|u_{i,p}|_p,|u_{j,p}|_p\right\}\le\left|\frac{2}{z}\right|_p,\] for all $p\in\cP$. It follows that the vector $(\bm{u},\bm{v})=(\bm{u}_i,\bm{v}_i)-(\bm{u}_j,\bm{v}_j)$ lies in $Q(M,z)$ and satisfies $|u_\infty|<1/2$. By Proposition \ref{prop.Smallu_i}, we must have that
\[F(M,t,z)\le |\bm{v}|=|\bm{v}_i-\bm{v}_j|,\] for all $t\in (0,1)$. Combining this with \eqref{eqn.vivjDiff}, and with (V1), we conclude in this case that $j=K$, $K_1=0, K_2=3$, and $K=K_1+K_2=3$. This completes the proof of Theorem \ref{thm.3GapsLats}, and also the proof of the upper bound in the statement of Theorem \ref{thm.3GapsAdeles1}.

\section{Proof of Theorem \ref{thm.3GapsAdeles1}, part 2}\label{sec.Proof2/2}
To complete the proof of Theorem \ref{thm.3GapsAdeles1} we must show that, for any choice of $\cP$, there are examples of $\bm\alpha\in X_\cP$ and $N\in\N$ for which $g_N(\bm\alpha)=3$. Since the definition of the metric in \eqref{eqn.MetricDef1} depends on whether $|\cP|$ is finite or infinite, we will consider these two cases separately. In the examples below, we make repeated implicit use of Proposition \ref{prop.DoubleBarCheck}.\vspace*{5bp}

\noindent {\bf Finite case ($\bm{|\cP|<\infty}$).} Suppose that $|\cP|<\infty$ and consider the following examples:
\begin{itemize}
	\item[(F1)] If $\cP = \lbrace 2 \rbrace $, take $\va = (\alpha_{\infty},\alpha_2) = (351/100,1)$ and $N = 52$. By direct computation we have that
	\begin{align*}
		\delta_{1,N}(\va)&= \|51\va\| = \frac{1}{100},\\
		\delta_{2,N}(\va)&= \|35\va\| = \frac{3}{20},\quad \text{and}\\
		\delta_{18,N}(\va)&= \|16\va\| = \frac{4}{25},    
	\end{align*} 
	which gives $g_N(\va) = 3$.\vspace*{5bp}
	\item[(F2)] If $\cP = \lbrace 3 \rbrace $, take $\va = (\alpha_{\infty},\alpha_3) = (16/5,1)$ and $N = 5$. Again by direct computation, we have that
	\begin{align*}
		\delta_{1,N}(\va)&= \|4\va\| = \frac{1}{5},\\
		\delta_{2,N}(\va)&= \|3\va\| = \frac{3}{5},\quad \text{and}\\
		\delta_{3,N}(\va)&= \|\va\| = \frac{4}{5},    
	\end{align*} 
	which gives $g_N(\va) = 3$.\vspace*{5bp}
	\item[(F3)] If $\cP=\{p_1,\ldots ,p_k\}$, with $p_1<\cdots <p_k$ and $p_1\cdots p_k\ge 5$, let $N=p_1\cdots p_k+1$ and
	\[\va=\left(\frac{1}{4p_1\cdots p_k},-1,\ldots ,-1\right).\]
	For each $1\le n\le N-3$ we have that
	\[-n\not= 0~\mathrm{mod}~p_i,~\text{for some}~1\le i\le k,\]
	and that
	\[-n\not= 1~\mathrm{mod}~p_j,~\text{for some}~1\le j\le k.\]
	It follows from Proposition \ref{prop.DoubleBarCheck} that $\|n\bm\alpha\|=1$ for all such $n$. From this we see that
	\begin{align*}
		\delta_{1,N}(\va)&= \|(p_1\cdots p_k)\va\| = \max\left\{\frac{1}{4},\frac{1}{p_1}\right\}\le 1/2,\\
		\delta_{2,N}(\va)&= \|(p_1\cdots p_k-1)\va\| = \frac{3}{4}+\frac{1}{4p_1\cdots p_k},\quad \text{and}\\
		\delta_{3,N}(\va)&= \|\bm\alpha\|=1,    
	\end{align*} 
	which gives $g_N(\va) = 3$. It is worth mentioning that one reason this construction does not work in the cases described in the previous two examples is because the corresponding value of $N$ is too small.
\end{itemize}
It is clear that examples (F1)-(F3) cover all possibilities with $|\cP|<\infty.$\vspace*{5bp}

\noindent {\bf Infinite case ($\bm{|\cP|=\infty}$).} Suppose that $|\cP|=\infty$ and consider the following examples:
\begin{itemize}
	\item[(I1)] If the smallest prime in $\cP$ is $3$ then let $\alpha_{\infty}=1/9, \alpha_3=1,$ and $\alpha_p=0$ for all $p\in\cP$ with $p\not=3$, and take $N = 11$. Then we have that
	\begin{align*}
		\delta_{1,N}(\va)&= \|10\va\| = \begin{cases}
			\frac{1}{5}&\text{if}~5\in\cP,\\
			\frac{1}{9}&\text{if}~5\notin\cP,
		\end{cases}\\
		\delta_{2,N}(\va)&= \|7\va\| = \frac{2}{9},\quad \text{and}\\
		\delta_{5,N}(\va)&= \|\va\| = \frac{1}{3},    
	\end{align*} 
	which gives $g_N(\va) = 3$.\vspace*{5bp}
	\item[(I2)] If $\cP$ contains the primes $2$ and $3$ then let $\alpha_{\infty}=27/50, \alpha_2=-1,$ and $\alpha_p=0$ for all $p\in\cP$ with $p\not=2$, and take $N = 6$. Then we have that
	\begin{align*}
		\delta_{1,N}(\va)&= \|5\va\| = \frac{3}{10},\\
		\delta_{2,N}(\va)&= \|4\va\| = \frac{1}{3},\quad \text{and}\\
		\delta_{3,N}(\va)&= \|\va\| = \frac{23}{50},    
	\end{align*} 
	which gives $g_N(\va) = 3$. We note that in the calculation of $\delta_{1,N}$ and $\delta_{2,N}$, it is important that $\alpha_3=0$. \vspace*{5bp}
	\item[(I3)] If $\cP$ contains the prime $2$ but not the prime $3$ then let $\alpha_{\infty}=8/49, \alpha_2=-1,$ and $\alpha_p=0$ for all $p\in\cP$ with $p\not=2$ or 5. Also let $\alpha_5=3$ if $5\in\cP$, and take $N = 8$. Then we have that
	\begin{align*}
		\delta_{1,N}(\va)&= \|7\va\| = \frac{1}{7},\\
		\delta_{2,N}(\va)&= \|5\va\| = \frac{1}{4},\quad \text{and}\\
		\delta_{4,N}(\va)&= \|2\va\| = \frac{16}{49},    
	\end{align*} 
	which gives $g_N(\va) = 3$. The assumption on $\alpha_5$ (if $5\in\cP$) is important in the calculation of $\delta_{1,N}$.\vspace*{5bp}
	\item[(I4)] If the smallest prime $q$ in $\cP$ is greater than or equal to $5$ then let $\alpha_{\infty}=\frac{q-1}{q(q-2)}, \alpha_q=-1,$ and $\alpha_p=0$ for all $p\in\cP$ with $p\not=q$, and take $N = q$. Then, by the type of argument given in example (F3) above, we have that
	\begin{align*}
	\delta_{1,N}(\va)&= \|(q-1)\va\| \\&= \max\left\{\frac{1}{q(q-2)},\max\left\{\frac{1}{p}:p\in\cP,p\not=q\right\}\right\}<\frac{1}{q},\\
	\delta_{2,N}(\va)&= \|(q-2)\va\| = \frac{1}{q},\quad \text{and}\\
	\delta_{3,N}(\va)&= \|\va\| = \frac{1}{q}+\frac{1}{q(q-2)},    
	\end{align*} 
which gives $g_N(\va) = 3$. 
\end{itemize}
Examples (I1)-(I4) cover all possibilities with $|\cP|=\infty.$ This therefore completes the proof of Theorem \ref{thm.3GapsAdeles1}.

\bibliography{GapsAdeles1}

\vspace{.15in}

{\footnotesize
	\noindent
	Department of Mathematics\\University of Houston,\\
	Houston, TX, United States.\\
	atdas@math.uh.edu, haynes@math.uh.edu\\
	}

\end{document}